\title{On Universal Cycles for new Classes of Combinatorial Structures.}
\author{\small Antonio Blanca \\
\small Georgia Institute of Technology \\
\small \tt{ablanca3@gatech.edu} \and
\small Anant P.~Godbole\\
\small East Tennessee State University\\
\small \tt{godbolea@etsu.edu} }
\begin{document}
\def\diam{{\rm diam}}
\def\ess{\rm ess}
\def\p{{\mathbb P}}
\def\ep{\varepsilon}
\def\P{{\rm Po}}
\def\cf{{\cal F}}
\def\cl{{\cal L}}
\def\e{{\mathbb E}}
\def\v{{\mathbb V}}
\def\l{\lambda}
\def\ll{{\ell_n}}
\def\a{{\alpha_n}}
\def\ph{\varphi(n)\sqrt n}
\def\dist{{\rm dist}}
\def\lr{\left(}
\def\rr{\right)}
\def\cd{\cdot}
\def\ts{\thinspace}
\def\lc{\left\{}
\def\rc{\right\}}
\def\qed{\vbox{\hrule\hbox{\vrule\kern3pt\vbox{\kern6pt}\kern3pt\vrule}\hrule}}
\newcommand{\hyp}{\mathcal{H}}
\newcommand{\hypp}{\mathcal{K}}
\newcommand{\Nu}{$\begin{Large}$\nu$\end{Large}$}
\newcommand{\ignore}[1]{}
\newtheorem{thm}{Theorem}
\newtheorem{result}{Result}
\newtheorem{lemma}[thm]{Lemma}
\newtheorem{cor}[thm]{Corollary}
\newtheorem{prop}[thm]{Proposition}
\maketitle

\begin{center}
\noindent{\bf Abstract}
\end{center}

\noindent A universal cycle (u-cycle) is a compact listing of a collection of combinatorial objects. In this paper, we use natural encodings of these objects to show the existence of u-cycles for collections of subsets, matroids, restricted multisets, chains of subsets, multichains, and lattice paths. For subsets, we show that a u-cycle exists for the $k$-subsets of an $n$-set if we let $k$ vary in a non zero length interval. We use this result to construct a ``covering" of length $(1+o(1))$$n \choose k$ for all subsets of $[n]$ of size exactly $k$ with a specific formula for the $o(1)$ term. We also show that u-cycles exist for all $n$-length words over some alphabet $\Sigma,$ which contain all characters from $R \subset \Sigma.$ Using this result we provide u-cycles for encodings of Sperner families of size 2 and proper chains of subsets.

\section {Introduction}

A universal cycle is a cyclic sequence which contains each element of a collection of combinatorial objects exactly once. A classic example is the de Bruijn cycle of order $n.$ If $n = 3,$ the binary cyclic string  11101000 contains the eight possible 3-length binary strings as a substring exactly once. Another universal cycle is 1234524135 which contains all 2-subsets of $[n] = \{1,2,3,4,5\}.$ 

\noindent \newline The authors of [1] conjectured that u-cycles exists for all $k$-subsets of $[n]$ provided that $k$ divides $n-1 \choose k-1$ and $n \ge n_0(k).$ Progress on this conjecture has been slow; a good compilation of partial results for this conjecture appears in [2] showing that most of the work is yet to be done. This motivated the work in by Curtis et al. [3] in which packings of length $(1 - o(1))$$n \choose k$ for $k$-subsets were found. (A u-cycle packing is a sequence in which each element appears at most once implying that some elements may be missing from the listing.)

\noindent \newline Given the difficulty for proving the conjecture, we adopt a different approach to the problem. So far we have been explicitly listing the $k$-subsets of $[n],$ but we could do otherwise. A binary string of length $n$ containing exactly $k$ ones provides a unique encoding for a particular $k$-subset of $n.$ A u-cycle for all binary strings of length $n$ containing exactly $k$ ones will do the trick by providing a compact listing for all $k$-subsets of $[n].$ Notice that the length of the new u-cycle would also be $n \choose k$ in most cases. It is worth pointing out that this encoding of subsets of $[n]$  is quite popular and widely used.

\noindent \newline Using this encoding we show that u-cycles exists for all $k$-subsets of $[n]$ if we let $k$ vary in some non zero length interval. This allows us to show the existence of a covering of length $(1+o(1))$$n \choose k$ for the $k$-subsets of $[n].$ The covering result complements nicely the work on packings in [3]. Unfortunately, not even using the advantages that this encoding provides were we able to show the existence of u-cycles for all $k$-subsets of $[n].$ The difficulty arises from the fact that once we fix the first $n$-length binary string containing $k$ ones, the $(n+1)$th character would be forced, and we end up with $\frac{{n \choose k}}{n}$ packings each of length $n.$ 

\noindent \newline Using similar encodings for collections of combinatorial structures we manage to find universal cycles for restricted multisets, $k$-chains of subsets of $[n]$, and multichains (set of chains) with a common ending subset. 

\noindent \newline While finding u-cycles for other structures, it was noticed that on many occasions we were just proving the existence of u-cycles for $n$-length words over some alphabet $\Sigma$ but with some restrictions. This leads to Theorem 11 in which we show that there exists a u-cycle for all $n$-length words over $\Sigma$ containing all characters from $R \subset \Sigma.$ Some applications of this result follow by showing the existence of universal cycles for natural encodings of Sperner families of size 2 and proper chains of subsets of size 2 as well.

\noindent \newline In the final section of this paper we show the existence of u-cycles for all 2-dimensional lattice paths of length $n$ starting at the origin and ending at a distance of at most $k$ from it. In this case, we deal with a more ``explicit" representation of the structure in question, and we believe that the strategies in use are extendable to other similar paths and to more dimensions.  

\section {Universal cycles of subsets}

Any subset of $[n] = \{1,...,n\}$ can be encoded as a binary string of length $n$. Let $2^{[n]}$ be the power set of $[n]$, and let $A \subseteq 2^{[n]}.$ A universal cycle of $A$ is a cyclic sequence of length $|A|$ such that  for all $a$ in $A$ its corresponding binary string appears exactly once in the sequence. For this representation of subsets, having $x$ ones in a binary string is equivalent to having a binary string of sum $x,$ and we will use both terminologies.

\noindent \newline There are some parcial results for this encoding of subsets of $[n]$ following from the work in [5]. If we consider only odd values for $n,$ then there exists a u-cycle for binary strings with the number of ones differing only by one from the number of zeros. These binary strings encode all subsets of size $\lceil\frac{n}{2}\rceil$ and $\lfloor\frac{n}{2}\rfloor.$ We follow to generalize this idea. 

\begin{thm}
Let $A$ be the set of all subsets of [n] with size between $p$ and $q,$ for $p < q$. Let $A'$ be the set of the corresponding strings for $A$. There exists a universal cycle for $A'$.
\end{thm}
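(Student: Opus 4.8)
The plan is to realize the universal cycle as an Eulerian circuit in a transition digraph, the standard device for results of this kind. I would take $G$ to be the directed graph whose vertices are all binary strings of length $n-1$ and whose edge set is $A'$, a string $s = s_1 s_2 \cdots s_n \in A'$ being regarded as the edge directed from $s_1 \cdots s_{n-1}$ to $s_2 \cdots s_n$. Along any closed walk that traverses every edge exactly once, the sequence of last bits of the edges visited spells a cyclic binary word of length $|A'|$ whose length-$n$ windows are precisely the members of $A'$, each occurring exactly once; that is a universal cycle for $A'$. By Euler's theorem for digraphs it therefore suffices to prove that $G$, after deletion of isolated vertices, is connected and that every vertex has in-degree equal to out-degree.

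Balance is immediate. Writing $w(u)$ for the number of ones in a string $u$, the out-degree of the vertex $u$ is the number of bits $b$ for which $u$ followed by $b$ lies in $A'$, i.e.\ the number of $b \in \{0,1\}$ with $w(u)+b$ in the integer interval $[p,q]$; the in-degree of $u$ is the number of $b$ for which $b$ followed by $u$ lies in $A'$, i.e.\ again the number of $b$ with $w(u)+b \in [p,q]$. These counts coincide. The same computation shows that the non-isolated vertices are exactly those of weight in $[p-1,q]$, that a vertex of weight $p-1$ or of weight $q$ has in- and out-degree $1$, and that a vertex of weight in $[p,q-1]$ — call such a vertex \emph{central} — has in- and out-degree $2$. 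This band of central weights is nonempty precisely because $p<q$, the one hypothesis we are given; its nonemptiness is indispensable, since at a single fixed weight $w$ every vertex would have degree $1$ and $G$ would split into $\binom{n}{w}/n$ disjoint directed cycles, the very obstruction recalled in the introduction.

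The substance of the proof is connectivity, and I would argue it with two moves. A \emph{rotation}: from a central vertex $u = u_1 u_2 \cdots u_{n-1}$ one follows the edge consisting of $u$ with $u_1$ appended, reaching the cyclic shift $u_2 \cdots u_{n-1} u_1$; the traversed length-$n$ string has weight $w(u)$ or $w(u)+1$, both in $[p,q]$ because $u$ is central, so a central vertex is connected to all of its cyclic shifts. A \emph{flip}: from a central vertex with a $1$ in front, appending $0$ reaches a vertex of weight $w(u)-1$, and from one with a $0$ in front, appending $1$ reaches a vertex of weight $w(u)+1$, in each case along a length-$n$ string whose weight stays in $[p,q]$. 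Composing a rotation, a downward flip and an upward flip (or a rotation, an upward flip and a downward flip) converts a cyclically adjacent ``$10$'' inside a central vertex into ``$01$'' (respectively ``$01$'' into ``$10$'') while leaving every other bit unchanged — an adjacent transposition — and every vertex visited along the way has weight in $[p-1,q]$. Since adjacent transpositions generate all rearrangements of a fixed multiset of bits, all length-$(n-1)$ strings of a common weight in $[p,q-1]$ lie in one component of $G$; the flip move then joins consecutive weights; and finally each boundary vertex, of weight $p-1$ or $q$, has a forced out-edge which — after repeatedly applying that forced edge, which rotates it, until the right bit reaches the front — leads into the central band. Assembling these facts, $G$ minus its isolated vertices is connected, and Euler's theorem delivers the circuit.

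I expect the main obstacle to be the bookkeeping inside the connectivity step: writing down precisely the rotate–flip–rotate–flip sequence that effects a prescribed adjacent transposition and checking that no intermediate length-$n$ string escapes $[p,q]$, and separately disposing of the degenerate central vertices $0^{n-1}$ (which arises only when $p=0$) and $1^{n-1}$ (only when $q=n$), which have no nontrivial cyclic shift and must be shown to hang onto the rest of $G$ by a direct argument. None of this is deep, but it is where a fully rigorous write-up will spend most of its length.
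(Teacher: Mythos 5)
Your proposal is correct and follows essentially the same route as the paper: the overlap digraph on binary strings of length $n-1$, the in-degree/out-degree symmetry by weight casework, and connectivity via cyclic shifts plus adjacent transpositions realized as short rotate--flip paths, with boundary-weight vertices forced into the central band. The only difference is cosmetic (the paper funnels everything to a canonical weight-$p$ vertex, while you connect within each central weight class and then across classes), and you are slightly more careful about the degenerate vertices $0^{n-1}$ and $1^{n-1}$ than the paper is.
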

\begin{proof} We start with a few simple remarks. Notice that $q \le n$ which implies that $p \le n - 1.$ If $p = n - 1$, then the binary string $s = 1...10$ of length $n+1$ will be a universal cycle. In this case, $|A'| = {n \choose n-1} + {n \choose n} = n+1 = |s|,$ and the binary string with $n$ ones as well as all the binary strings with $n-1$  ones are all contained in the cyclic string $s.$ So we may assume that $p < n-1.$

\noindent \newline We construct a digraph $D = <V,E>$ where $V$ is the set of all binary strings of length $n-1$. Let $v = a_1a_2...a_{n-1}$ and $u = b_1b_2...b_{n-1} \in V$; there is a directed edge $(v,u) \in E$ if and only if $a_i = b_{i-1}$ for $i = 2,...,n-1$ and $b_{n-1} + \sum\limits_{i=1}^{n-1} a_i = k$ for some $k$ such that $p \le k \le q.$  Thus, every edge in $E$ corresponds to a unique element of $A'$ (and viceversa) establishing a bijection.

\noindent \newline $D$ is known as the overlap graph for $V.$ Let $P = \{v_0,v_1,...,v_k\}$ be a path in $D$ with $v_i \in V.$ Two adjacent vertices in $P,$ $v_i$ and $v_{i+1}$ will determine a $n$-length string by overlaping the $n-2$ characters they have in common. Hence, path $P$ will determine a string $S$ of length $n - 1 + k$ which contains $k$ elements of $A'$ just by following $P$ and adding the corresponding characters. It follows then that an Eulerian circuit in $D$ will determine a universal cycle for $A'$. The proof reduces to showing that $D$ is an Eulerian digraph. 

\noindent \newline {\bf Fact 1:} [4] A digraph $D = <V,E>$ is Eulerian if and only if $d^+(v) = d^-(v)$ for all $v\in V$ and the underlying graph is connected except for isolated vertices.

\noindent \newline Let $v = a_1...a_{n-1} \in V,$ and let $\sum\limits_{i=1}^{n-1} a_i = w.$ If $w > q$ then any edges incident to $v$ would determine an edge corresponding to a binary string not in $A',$ so $d^+(v) = d^-(v) = 0.$ Similarly, if $w < p - 1,$ then $d^+(v) = d^-(v) = 0.$ If $p \le w < q,$ then any $x \in \{0,1\}$ can be appended to $v$ and the edge will correspond to an element of $|A'|$, so $d^+(v) = d^-(v) = 2.$

\noindent \newline Now we just take care of the border cases. If $w = p - 1,$ then any incident edge to $v$ should append  a ``1", either at the end or at the beginning to form an $n$-length string with $p$ ones, so $d^+(v) = d^-(v) = 1.$ In the case where $w = q,$ incident edges should append  a ``0" so that the resulting $n$-length string has no more than $q$ ones, hence $d^+(v) = d^-(v) = 1.$ It follows then that $d^+(v) = d^-(v)$ for each $v \in V.$ 

\noindent \newline Next, we analyze the connectivity of $D.$ We will show that every vertex in $V$ is connected to one with exactly $p$ ones. If $v \in V$ has more than $p$ ones, then we can go through edges that append zeros to the end of $v$ until we get to some vertex of sum $p.$ If $v$ has less than $p$ ones, then it is either isolated, or it has exactly $p-1$ ones. Similarly, we go through edges that append ones until we get to a vertex of sum $p.$ Appending ``1" to the end of $v$ may or may not lead us to some other vertex with more ones than $v$ depending on whether or not its first character is ``0". A ``0" is guaranteed to appear in some point since we are assuming that $p < n-1.$

\begin{lemma} Let $v = 0...01...1 \in V$ be a vertex with $p$ ones. Then $v$ is connected to every other vertex in $V$ that has exactly $p$ ones.
\end{lemma}

\begin{proof} Doing $``01" \rightarrow ``10"$ swaps only we can get from $v$ to any other binary string with $p$ ones. In $D$, this swap of adjacent elements will be the equivalent of going through a valid path from $v$ to some vertex $u$ that looks just like $v$ except that a ``01" is swaped to a ``10". If this kind of swap is feasible, we guarantee that $v$ is connected to every other vertex of sum $p$.

\noindent \newline Let $v = a_1,..,a_{n-1} \in V$ be a vertex of sum $p.$ A left shift of $v$ is the vertex $v_l = a_2,...,a_{n-1},a_1.$ If $v$ has sum $p$, then the edge $(v,v_l) \in E$ because $\sum\limits_{i=1}^{n-1} a_i + a_1 = p + a_1 \le q,$ and the overlap between $v$ and $v_l$ matches correctly. So any vertex of sum $p$ is connected to all other vertices whose binary string can be obtained from $v$ by some number of left shifts.

\noindent \newline If vertex $v$ contains ``01" at some position $i$ then $v = a_1...a_{i-1}01a_{i+2}...a_{n-1}.$ Since by left shifting $v$ we can get to $v' = 01a_{i+2}...a_{n-1}a_1...a_{i-1},$ then there exists a path from $v$ to $v'.$ Now, let $v'' = 1a_{i+2}...a_{n-1}a_1...$ $a_{i-1}1$ and $v''' = a_{i+2}...a_{n-1}a_1...a_{i-1}10$. Both edges, $e = (a',a'') $ and $e' =(a'',a''')$ are in $E$ because the vertices match accordingly and the corresponding $n$-length string for $e$ and $e'$ have $p+1$ and $p$ ones respectively. The vertex $v'''$ has $p$ ones, so by left shifting it we can get to the vertex $w = a_1...a_{i-1}10a_{i+2}...a_{n-1}$ and this concludes of proof for this lemma.\end{proof}

\noindent \newline So, as every vertex of $D$ is either isolated or connected to one of sum $p,$ and every vertex of sum $p$ is reachable from one particular vertex, $D$ has at most one nontrivial component. Given that for all $v \in V, d^+(v) = d^-(v),$ by Fact 1 $D$ is Eulerian, and hence there is a universal cycle for $A'$.
\end{proof}
\noindent \newline The fact that there exists a universal cycle for all the subsets whose size is in an interval allow us to identify universal cycles for other fundamental combinatorial objects. We will illustrate this by proving the existence of universal cycles for a specific kind of matroid. 

\noindent \newline Matroids were introduced by Whitney in 1935. A matroid $M$ consists of a pair $(E,\Gamma)$ where $E$ is a finite set and $\Gamma$ is a set of subsets of $E$ such that [7]:

\noindent i) $\Gamma$ is not empty.

\noindent ii) Every subset of each member of $\Gamma$ is also in $\Gamma.$

\noindent iii) If $X$ and $Y$ are in $\Gamma$ and $|X| = |Y| + 1,$ then $\exists x \in X-Y$ s.t. $Y \cup \{x\}$ is also in $\Gamma.$

\begin{cor}
Let $M = (E,\Gamma)$ be the matroid where $E = [n]$ and $\Gamma$ is the set of all subsets of $[n]$ of size less than or equal to $q.$ There exists a universal cycle for the set of binary encodings of the elements of $\Gamma.$
\end{cor}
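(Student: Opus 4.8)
The plan is to recognize that this corollary is simply the special case $p = 0$ of Theorem 1, so essentially no new work is required. First I would check that $M = (E,\Gamma)$ as defined really is a matroid — it is the uniform matroid $U_{q,n}$. Axiom (i) holds since $\emptyset \in \Gamma$; axiom (ii) holds because any subset of a set of size at most $q$ again has size at most $q$; and axiom (iii) holds because if $X, Y \in \Gamma$ with $|X| = |Y| + 1$, then $|Y| \le q - 1$, and choosing any $x \in X - Y$ (nonempty since $|X| > |Y|$) gives $|Y \cup \{x\}| = |Y| + 1 \le q$, so $Y \cup \{x\} \in \Gamma$.

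Next I would observe that $\Gamma$ is exactly the collection of subsets of $[n]$ whose cardinality lies in the interval from $0$ to $q$. Provided $q \ge 1$ we have $0 < q$, so Theorem 1 applies with $p = 0$ and yields directly a universal cycle for the set of binary encodings of the members of $\Gamma$. This finishes the argument except for the degenerate case $q = 0$, in which $\Gamma = \{\emptyset\}$ and the all-zeros string of length $n$ is itself a trivial universal cycle of length $1$.

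I do not expect a genuine obstacle here: all of the substantive combinatorics — the overlap digraph, the in/out degree balance, and the connectivity argument of Lemma 2 — is already carried out in the proof of Theorem 1, and the corollary merely instantiates it. The only point requiring a little care is the boundary condition, since Theorem 1 is stated with the strict inequality $p < q$; hence $q = 0$ must be disposed of separately as above, consistent with the convention in this section that $\emptyset$ is encoded by the all-zeros string. It is also worth remarking, though not strictly needed, that the same reasoning gives universal cycles for the binary encodings of the independent sets of every uniform matroid.
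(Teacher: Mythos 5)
Your proposal is correct and matches the paper's proof, which likewise just observes that $\Gamma$ consists of all subsets of size $k$ with $0 \le k \le q$ and invokes Theorem 1. Your extra care with the degenerate case $q=0$ (and the verification of the matroid axioms) goes slightly beyond what the paper records, but the substance is identical.
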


\noindent{\bf Proof.} $\Gamma$ contains all the subsets of $[n]$ of size $k$ where $0 \le k \le q.$ Since a universal cycle exists for all subsets with size in a given range, it is guaranteed that a universal cycle for the binary codes of all the elements in $\Gamma$ exists.

\noindent \newline So far, we have found universal cycles for binary encodings of some subset $A$ of the power set of $[n].$ If instead we look at uncoded universal cycles for $A$ we find that some important work has been done.

\noindent \newline If $A$ is the set of all $k$-subsets of $[n],$ in the early 1990's the authors of [1] conjectured that universal cycles exists for $A$ for some $n$ sufficiently large. However, progress on this conjecture has not occured since the work of Hurlbert [9] and B. Jackson [10], which motivated the work by Curtis et al. [3].

\noindent \newline The authors of [3] realized the difficulty of proving the conjecture stated in [1], so they provided a different approach. They defined a universal packing for $k$-subsets of $[n]$ as a cyclic sequence which contains each $k$-subset at most once. For example the cyclic sequence ``12345" is a universal packing when $n = 5$ and $k = 3$ which contains 5 of the 10 possible 3-subsets for $[n]$. Curtis et al. found a construction for universal packings of $k$-subsets which guarantees a u-cycle of length $(1 - o(1))$$n \choose k$. We propose a different approach. Instead of looking at universal packings we define universal coverings.

\noindent \newline {\bf Definition} Given a set $A,$ a universal covering will be a cyclic sequence which contains each element of $A$ at least once. 

\noindent \newline Using our binary encoding, a universal covering for the $k$-subsets of $[n]$ will be a binary cyclic string which contains all $n \choose k$ $n$-length binary strings corresponding to the $k$-subsets of $[n]$ as substrings.

\begin{thm}
Let $A$ be the set of all $k$-subsets of [n] and let $A_k$ be the set of corresponding binary strings for $A.$ There exists a universal covering for $A_k$ of length (1+o(1))$n \choose k$.
\end{thm}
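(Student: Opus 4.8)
\noindent{\bf Proof proposal.} The plan is to read the covering straight off Theorem~1. The key observation is that a universal cycle for a collection of binary strings that \emph{contains} $A_k$ is automatically a universal covering for $A_k$: such a cycle lists every string of the larger collection exactly once, hence every string of $A_k$ at least once. So I would pick a size-interval of length one around $k$ and invoke Theorem~1.

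\noindent \newline Concretely, assume $1 \le k \le n-1$; the extreme cases $k \in \{0,n\}$ are trivial, the cyclic strings $0$ and $1$ being coverings of length $1$ for $A_0$ and $A_n$. Apply Theorem~1 with $p = k-1$ and $q = k$, which is legitimate since $0 \le p < q \le n$ and $p < n-1$. This produces a universal cycle $C$ for the set $A'$ of all length-$n$ binary strings whose number of ones lies in $\{k-1,k\}$; since $A_k \subseteq A'$, the cyclic string $C$ contains every string of $A_k$, so $C$ is a universal covering for $A_k$. The proof of Theorem~1 identifies the length of the u-cycle with the number of objects listed --- each edge of the overlap digraph corresponds to exactly one string of $A'$ and an Eulerian circuit uses each edge once --- so
\[
|C| \;=\; |A'| \;=\; \binom{n}{k-1} + \binom{n}{k} \;=\; \binom{n+1}{k},
\]
by Pascal's rule. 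Since $\binom{n+1}{k} = \binom{n}{k}\cdot\frac{n+1}{n-k+1} = \binom{n}{k}\bigl(1 + \tfrac{k}{n-k+1}\bigr)$, the string $C$ is a universal covering of length $(1+o(1))\binom{n}{k}$, with explicit error term $\tfrac{k}{n-k+1}$. Complementing every bit turns a covering for $A_{n-k}$ into one for $A_k$ of the same length, so one may instead run the construction on whichever of $k,\,n-k$ is smaller; the error term then becomes $\min\bigl\{\tfrac{k}{n-k+1},\,\tfrac{n-k}{k+1}\bigr\}$.

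\noindent \newline I do not expect a genuine obstacle: essentially all the work is packed into Theorem~1, and what remains is the ``covering $=$ u-cycle for a superset'' remark together with one binomial identity. The only step needing care is the asymptotics: $\tfrac{k}{n-k+1}$ (or its symmetrized form) is $o(1)$ precisely when $\min\{k,n-k\}$ is of smaller order than $n$ --- e.g. $k$ fixed, or $k=o(n)$ --- which is the regime the statement targets. Uniformly in $k$ the same construction still delivers a covering of length at most $2\binom{n}{k}$, by choosing the interval $[k-1,k]$ or $[k,k+1]$ according as $k \le n/2$ or $k > n/2$.
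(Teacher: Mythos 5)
Your proposal is correct and follows essentially the same route as the paper: apply Theorem~1 with the size interval $\{k-1,k\}$, observe that the resulting universal cycle contains every string of $A_k$ and is therefore a covering, and compute the length ratio $1+\tfrac{k}{n-k+1}$. Your added remarks (the Pascal identity, the complementation trick, and the caveat that the error term is $o(1)$ only when $\min\{k,n-k\}=o(n)$) are correct refinements the paper leaves implicit, but the core argument is identical.
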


\begin{proof} Let $A_{k-1}$ be the set of all the binary encodings for the $(k-1)$-subsets of $[n],$ and let $A_{k-1,k} = A_{k-1} \cup A_{k}$. Then $A_{k-1,k}$ will be the set of all encodings for subsets of size $k-1$ and $k,$ and by Theorem 1 there exists a universal cycle $U$ for $A_{k-1,k}$. $U$ is a universal cover for $A_k.$ Also,

\noindent \newline $\lim\limits_{n \rightarrow \infty} \frac{|U|}{|A_k|}$ = $\lim\limits_{n \rightarrow \infty} \frac{{n \choose k} + {n \choose k-1}}{{n \choose k}}$ = $\lim\limits_{n \rightarrow \infty} (1 + \frac{k}{n - k + 1}) = 1$ 

\noindent \newline From this calculation it follows that $|U| = (1 + o(1))$$n \choose k$, where $o(1) = \frac{k}{n - k + 1}$.\end{proof}

\section {Universal cycles of multisets} A universal cycle of a set $A$ of multisets of $[n]$ is a cyclic sequence of length $|A|$ such that  for all $a \in A$ its corresponding string appears exactly once in the sequence. Hurlbert et al. conjectured in [11] that a u-cycle exists for all $k$-multisets of $[n]$ given that $n$ is large enough and divides ${n + k - 1} \choose {k}$. They were able to prove that the conjecture holds for $k = 2$ or $3.$ We adopt a similar strategy for multisets as we did previously for subsets; given the difficulty of finding u-cycles for $A,$ we analize the existance of u-cycles for natural encodings of $A$.

\noindent \newline A multiset $M$ with elements taken from $[n]$ can be encoded by the string $a_1a_2...a_n$ where $a_i \ge 0$ represents the number of times element $i$ appears in $M$. A multiset $M$ is $t$-restricted if every element of $[n]$ appears no more than $t$ times in $M$.

\begin{thm}
Let $A$ be the set of all $t$-restricted $k$-multisets of [n] such that $n > 1,$ $p \le k \le q$, $t = q - p$, and $2p \le q$. Let $A'$ be the set of corresponding strings for $A$. There exists a universal cycle for $A'$.
\end{thm}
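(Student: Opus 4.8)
\noindent The plan is to follow the proof of Theorem~1 almost verbatim: recast the problem as finding an Eulerian circuit in an overlap digraph, dispatch the degree condition for free, and concentrate the work on connectivity. Concretely, I would take $D=\langle V,E\rangle$ where $V$ is the set of all strings $a_1\cdots a_{n-1}$ of length $n-1$ over the alphabet $\{0,1,\dots,t\}$, and there is a directed edge from $v=a_1\cdots a_{n-1}$ to $u=b_1\cdots b_{n-1}$ exactly when $a_i=b_{i-1}$ for $i=2,\dots,n-1$ and the overlapped length-$n$ string $a_1\cdots a_{n-1}b_{n-1}$ has digit sum in $[p,q]$. Since a $t$-restricted multiset has every coordinate at most $t$, the edges of $D$ are in bijection with $A'$, and, exactly as in Theorem~1, an Eulerian circuit of $D$ spells out a universal cycle for $A'$. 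So it suffices to prove $D$ is Eulerian, i.e.\ (Fact~1) that $d^+(v)=d^-(v)$ for every $v$ and that $D$ has at most one non-trivial component.

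\noindent The degree balance is immediate and symmetric: if $v$ has digit sum $w$, then both out-edges of $v$ (append a digit $c$ on the right) and in-edges of $v$ (prepend a digit $c$ on the left) exist precisely for those $c\in\{0,\dots,t\}$ with $p\le w+c\le q$, so $d^+(v)=d^-(v)$, and $v$ is isolated exactly when $w>q$ or $w<p-t$. Here the hypotheses enter for the first time: since $t=q-p$ and $2p\le q$ we have $p-t=2p-q\le 0\le w$, so no vertex has $w<p-t$ and the isolated vertices are exactly those with $w>q$. The same inequalities give $p\le t$, so the digit value $p$ is allowed and $v_0:=0^{\,n-2}p$ is a legal vertex; they also make several ``padding'' edges used below legal.

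\noindent For connectivity I would show, in two stages, that every non-isolated vertex lies in the component of $v_0$. Stage one: every non-isolated vertex reaches a vertex of digit sum exactly $p$. If the current sum $w$ satisfies $w>p$, traverse edges that append $0$ on the right --- legal while the current sum stays in $[p,q]$ --- except that at the step where appending $0$ would push the sum below $p$ (say the current sum is $w'$ and the current leading digit is $\ell$, so $w'-\ell<p$) one instead appends the digit $p-(w'-\ell)$, which is legal (its value is $p-w'+\ell\le\ell\le t$ and the induced window has sum $p+\ell\in[p,q]$) and lands the vertex sum exactly on $p$; a run of leading zeros merely rotates the string, so this terminates. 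If instead $w<p$, appending the digit $t$ once already puts the vertex sum in $[p,q]$ (the new sum $w-\ell+t$ lies in $[t,w+t]\subseteq[p,q)$ because $\ell\le w$, $t\ge p$, and $w+t<q$), after which the previous case applies. Stage two: all digit-sum-$p$ vertices are connected to $v_0$. Left shifts $a_1\cdots a_{n-1}\mapsto a_2\cdots a_{n-1}a_1$ are always legal on a sum-$p$ vertex (the induced window has sum $p+a_1\le p+t=q$), so every cyclic rotation of such a vertex is reachable from it; moreover the two-edge ``merge'' $a_1a_2a_3\cdots a_{n-1}\rightsquigarrow a_3\cdots a_{n-1}\,0\,(a_1+a_2)$ is legal, its two intermediate windows having sums $p$ and $p+a_2\le q$, and it coalesces the first two coordinates into one. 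Repeating rotations and merges drives any sum-$p$ string to one with a single nonzero coordinate, i.e.\ to a rotation of $v_0$; since every move used is a (reversible) edge, all sum-$p$ vertices are connected to $v_0$. Combining the two stages with the degree balance, Fact~1 shows $D$ is Eulerian, hence $A'$ has a universal cycle. (The degenerate case $n=2$, with $V$ a single character, is immediate, since every non-isolated vertex then has an edge to $p$.)

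\noindent The step I expect to be the main obstacle is Stage two; it is the multiset analogue of Lemma~2, and, just as there, the difficulty is not in describing the moves but in checking that along every such path each intermediate length-$n$ window keeps its digit sum in $[p,q]$ while no coordinate ever exceeds $t$, and that the coalescing process can always be completed. This bookkeeping is exactly what forces the hypothesis $2p\le q$: it gives $p\le t$, so the merged mass $a_1+a_2\le p$ fits in a single legal coordinate and the target $0^{\,n-2}p$ is itself a legal vertex; with a strictly smaller alphabet the merge cannot be performed. Stage one needs only a short termination argument for the padding processes, which is routine.
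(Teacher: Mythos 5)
Your proposal is correct and follows essentially the same route as the paper: the same overlap digraph over the alphabet $\{0,\dots,t\}$, the same degree count, and the same two-stage connectivity argument (every non-isolated vertex reaches a sum-$p$ vertex, and all sum-$p$ vertices reach $0^{\,n-2}p$ via left shifts and the two-edge merge, both legalized by $p\le t=q-p$). Your observation that a single appended $t$ already lifts a vertex of sum $w<p$ into $[p,q)$ is a small clean-up of the paper's repeated-$t$ padding and its termination argument, but otherwise the arguments coincide.
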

\medskip

\begin{proof}  If $p = q = 0,$ then a u-cycle trivially exists. Since $p$ and $q$ are both positive it follows that $p < q.$ Let $V$ be the set of all strings of length $n-1$ over the alphabet $\{0,1,...,t\}$, and let $D=<V,E>$ be a digraph. Let also $v = a_1a_2...a_{n-1}$ and $u = b_1b_2...b_{n-1}$ be elements of $V.$ There is a directed edge $(v,u) \in E$ if and only if $a_i = b_{i-1}$ for $i = 2,...,n-1$ and $\sum\limits_{i=1}^{n-1} a_i + b_{n-1} = k$ for some $k$ such that $p \le k \le q.$ An Eulerian circuit in $D$ will determine a universal cycle for $A'$, so once again the proof reduces to show that $D$ is Eulerian.

\noindent \newline Let $v = a_1a_2...a_{n-1} \in V$ such that $\sum\limits_{i=1}^{n-1} a_i = w$. If $w > q$ already, then $d^+(v) = d^-(v) = 0$ because nothing can be added to $v$ to get a string in $A'$. If $w = q$, then it is only possible to add a 0 to $v$ since adding anything else will make the $n$-length string have sum greater than $q$, so $d^+(v) = d^-(v) = 1.$ If $p \le w < q$, then we will have edges coming in and out of $v$ for every $x \ge 0$ such that $w+x \le q$, hence $d^+(v) = d^-(v) = q-w+1.$ If $w < p$, then the valid edges will be those corresponding the values of $x$ such that $p \le w + x \le q$ hence $d^+(v) = d^-(v) = q-2p+w+1$ because $p \le q - p.$ Thus, $d^+(v) = d^-(v)$ for all $v \in V$.

\noindent \newline We will prove the weak connectivity of $D$ in two steps.

\begin{lemma} Every $v \in V$  is either isolated or connected to some vertex $u = b_1...b_{n-1} \in V$ such that $\sum\limits_{i=1}^{n-1} b_i = p.$
\end{lemma}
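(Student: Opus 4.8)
The plan is to mirror the connectivity argument used in the proof of Theorem~1, adapting it to the alphabet $\{0,1,\dots,t\}$ and to the multiset constraint. First I would dispose of the isolated vertices: if $v=a_1\dots a_{n-1}$ has $\sum a_i=w>q$, then $v$ is isolated by the degree computation already established, so there is nothing to prove. Hence I may assume $w\le q$. The goal is to exhibit, for every non-isolated $v$, a directed path in $D$ from $v$ to some vertex of sum exactly $p$.

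Next I would treat the two cases $w>p$ and $w<p$ separately (the case $w=p$ being trivial). If $w>p$, the idea is to repeatedly append $0$'s at the right end: appending a $0$ to a string of sum $w$ with $p\le w\le q$ yields an $n$-length string of sum $w\in[p,q]$, so the edge is legal, and the new length-$(n-1)$ vertex is $a_2\dots a_{n-1}0$. Iterating this shifts the string leftward while zeros flow in from the right, and after finitely many such steps we reach a vertex all of whose ``mass'' has been pushed to the left — in particular we pass through, or can arrange to pass through, vertices of every intermediate sum down to $p$, since each append-$0$ step either keeps the sum fixed or decreases it by the value $a_1$ that falls off the left end. To land exactly on sum $p$ I would, once the running sum is in the range $[p,q]$ and about to drop below $p$, instead append the appropriate digit $x$ with $p\le (w-a_1)+\text{(stuff)}\le q$; the hypothesis $2p\le q$, i.e. $t=q-p\ge p$, is exactly what guarantees that whatever correction digit $x\in\{0,\dots,t\}$ is needed to hit sum $p$ is actually available in the alphabet. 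If instead $w<p$, then $v$ is non-isolated only if a legal digit can be appended, and by the degree count $d^+(v)=q-2p+w+1\ge 1$; here I would append digits that \emph{increase} the running sum (again using $t\ge p$ to have large enough digits available) until the sum reaches $p$, at which point the corresponding length-$(n-1)$ vertex has sum $p$ (or can be adjusted by one more append to have sum exactly $p$, as in the previous case).

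The main obstacle, and the place where the hypotheses $t=q-p$ and $2p\le q$ really earn their keep, is the ``exact landing'' bookkeeping: pushing mass around is easy, but guaranteeing that we can terminate \emph{precisely} at sum $p$ rather than overshooting requires that at the critical step the needed digit lies in $\{0,1,\dots,t\}$. Concretely, if the running sum is $w'$ with $w'$ about to leave the window $[p,q]$, the digit we must append is determined modulo the amount $a_1$ sliding off the left, and one checks that this digit is between $0$ and $q-p=t$ precisely because $p\le t$. I would therefore organize the proof so that this single arithmetic check is isolated and stated cleanly, and then note that once every non-isolated vertex reaches a vertex of sum $p$, the second step (Lemma~\ref{} analogue: all sum-$p$ vertices are mutually connected, via $``01"\to``10"$-type digit swaps exactly as in Lemma~2) finishes the weak-connectivity argument, so that combined with $d^+(v)=d^-(v)$ and Fact~1, $D$ is Eulerian and the u-cycle for $A'$ exists.
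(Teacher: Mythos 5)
Your plan matches the paper's proof essentially step for step: for $w>p$ descend by appending $0$'s, for $w<p$ ascend by appending the maximal digit $t=q-p$, and in either case finish with a single correction digit chosen so the final vertex has sum exactly $p$, with the arithmetic check that this digit lies in $\{0,\dots,t\}$ and that every intermediate edge has sum in $[p,q]$. Two details the paper makes explicit and you should too: in the descending case the correction digit is at most the digit $a_i$ that falls off the left end, so it is $\le t$ automatically and the hypothesis $2p\le q$ is not needed there (it is needed only for the ascending edges and the exact landing when $w<p$); and in the ascending case one must verify the running sum actually reaches $p$ before the process stalls at the all-$t$ string, i.e.\ that $(n-1)(q-p)\ge p$, which is exactly where $2p\le q$ and $n\ge 2$ enter.
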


\begin{proof}  Let $v = a_1...a_{n-1}$ and $\sum\limits_{i=1}^{n-1} a_i = w.$ If $w = p$ then the result is trivial because every vertex is vacuosly connected to itself, and if $w > q$ then $v$ is isolated. If $p < w \le q$, let $i$ be the smallest value for which $a_1 + a_2 + ... + a_i \ge w - p.$ Such $i$ exists because $w > p.$ Since $a_1 + ... + a_{i-1} < w - p$ we can follow the path $P = a_1...a_{n-1} \rightarrow a_2...a_{n-1}0 \rightarrow ... \rightarrow a_i...a_{n-1}0...0 \rightarrow a_{i+1}...a_{n-1}0...0b$ where $b = p - w + (a_1 + ... + a_i).$ This is possible because every pair of consecutive vertices in $P$ determine an $n$-length string with sum greater than $p$ and less than $q$. The last vertex in $P$ has sum exactly $p$. Hence if $p < w \le q,$ then $v$ is connected to a vertex of sum $p$. Notice that $b \le q - p.$

\noindent \newline Now, if $w < p$, we will go from $v$ through vertices of equal or higher sum until we reach a vertex of sum $p$. The idea would be to go through the edges adding $t = q - p$ to the sequence until possible. So, let $i$ be the smallest value for which $(q - p - a_1) + ... + (q - p - a_i) \ge p - w.$ We need to show first that such $i$ is guaranteed to exist. Suppose it doesn't, then:
\begin{eqnarray}
(q - p - a_1) + ... + (q - p - a_{n-1}) < p - w\nonumber\\
(q - p)(n-1) - (a_1+...+a_{n-1}) < p - w\nonumber\\
q(n-1) <  np\nonumber\\\nonumber
\end{eqnarray}
which does not hold given that $2p \le q$ and $n \ge 2.$

\noindent \newline Note that the path $P = a_1....a_{n-1} \rightarrow a_2...a_{n-1}t \rightarrow ... \rightarrow a_i...a_{n-1}t...t \rightarrow a_{i+1}...a_{n-1}$ $t...tb$ where $b = p - w - i(q - p) + (a_1 + ... + a_i)$ is feasible in $D$ because we are always adding $t =q - p,$ which given that $p \le q - p,$ guarantees that $w + (q - p) \ge p.$ Also, each vertex in $P$ has sum at most $p-1$ (except the last one); since $(p - 1) + (q - p) < q,$ then every edge in $P$ is in the valid interval. Given that $P$ exists,we see that $v$ is also connected to some vertex of sum $p$ if $w < p,$ and the proof of this lemma is complete. \end{proof}

\begin{lemma} Every vertex of sum $p$ in $D$ is connected to $v = 0...0p \in V.$
\end{lemma}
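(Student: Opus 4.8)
The goal is to show that any vertex $u = b_1 \ldots b_{n-1}$ with $\sum b_i = p$ is connected (in the underlying undirected graph of $D$) to the distinguished vertex $v_0 = 0 \ldots 0 p$. I would mirror the strategy used for subsets in Lemma~2: reduce the problem to a sequence of elementary local moves and show each such move is realizable as a (short) path in $D$. First I would establish that \emph{left shifts are always available}: if $v = a_1 \ldots a_{n-1}$ has sum $p$, then $(v, v_\ell) \in E$ where $v_\ell = a_2 \ldots a_{n-1} a_1$, since the $n$-length string it determines has sum $p + a_1 \le 2p \le q$ and the overlap matches. Hence every sum-$p$ vertex is connected to all of its cyclic rotations.

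The second ingredient is a local ``transfer'' move analogous to the $01 \to 10$ swap in Lemma~2: I would show that from a sum-$p$ vertex containing a substring $\ldots a\,b \ldots$ with $b \geq 1$ we can reach the vertex with that substring replaced by $\ldots (a{+}1)(b{-}1) \ldots$, provided $a + 1 \leq t = q - p$. Using cyclic rotations to bring the relevant pair to a convenient position (say the front), the move is carried out by appending characters one at a time: pushing a ``$1$'' off the unit we are draining and into an adjacent cell, exactly as $v'' = 1 a_{i+2} \ldots 1$ and $v''' = a_{i+2} \ldots 1 0$ were used before. Every intermediate $n$-length string in this little detour must have sum in $[p, q]$; since we only temporarily raise the sum to $p + 1 \le q$ (using $p < q$) and then return it to $p$, and since the cell receiving the unit stays within the alphabet bound because $a + 1 \le t$, all edges are valid.

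Granting these two moves, the argument finishes by a ``mass-transport'' induction: starting from an arbitrary sum-$p$ vertex, repeatedly rotate so that a nonzero entry sits just after a not-yet-full entry and apply the transfer move to push units rightward and pile them up, eventually reaching a rotation of a vertex of the form $0 \ldots 0 c_1 \ldots c_m$ with the mass pushed to the tail; since $p \le t$, all of the mass fits in the single last cell, giving $0 \ldots 0 p = v_0$. (One should check that the greedy ``always move the leftmost unit as far right as possible'' process terminates, which follows by a standard monotone potential such as $\sum i\cdot a_i$.)

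The main obstacle I expect is the bookkeeping in the transfer move: one must verify that every $n$-length window generated along the detour has digit sum within $[p,q]$ \emph{and} that no digit ever exceeds $t$. The digit-sum condition leans on $p < q$ (so a sum of $p+1$ is admissible), while the alphabet-bound condition is where the hypothesis $2p \le q$, equivalently $p \le t$, is used — it guarantees both that a left shift never overflows a cell and that the final consolidation of all $p$ units into one cell is legal. If a single transfer might need to raise a cell above $t$, one would instead cascade the unit through several cells, which is still a valid path but lengthens the detour; I would note this and check the cascade stays in range using the same two inequalities. Once these verifications are in place, connectivity of every sum-$p$ vertex to $v_0$ follows, and combined with Lemma~3 this yields the weak connectivity of $D$.
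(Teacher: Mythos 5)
Your proposal is correct and is essentially the paper's own argument: both use left shifts (valid at sum-$p$ vertices since $p+a_1\le 2p\le q$) together with a local adjacent-cell transfer realized as a short detour of two edges, iterated until all $p$ units are consolidated into a single cell, which is legal precisely because $p\le t=q-p$. The only difference is granularity — the paper merges the entire leftmost nonzero entry $a_i$ into its neighbor in one step (checking $p+a_{i+1}\le q$ and $a_i+a_{i+1}\le p\le t$), whereas you move one unit at a time; since every cell of a sum-$p$ vertex is at most $p\le t$, your condition $a+1\le t$ always holds and the cascade fallback is never needed.
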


\begin{proof} Notice first that  $v$ is a valid vertex given that $p \le q - p.$ Let $u = a_1a_2...a_{n-1} \in V$ be some vertex of sum $p,$ and let $i$ be the smallest value for which $a_i \neq 0.$ So, $u = 00...a_i...a_{n-1}$ and $\sum\limits_{j = i}^{n-1} a_j= p.$ We will prove that $u$ is connected to $u^* = 0...0(a_i + a_{i+1})...a_{n-1}.$ From this it follows that by repeating this process $n - 1 - i$ times we get from $u$ to $v$ since $u^*$ will also be a vertex of sum $p.$

\noindent \newline Left shifts (as defined before) are possible in any vertex of sum $p.$ So, by doing left shifts on $u$ we prove that it is connected to $u' = a_ia_{i+1}...a_{n-1}0...0.$ 

\noindent \newline Let $u'' = a_{i+1}a_{i+2}...a_{n-1}0...0 \in V.$ Notice then that $(u',u'') \in E$ because $u'$ and $u''$ match properly and $\sum\limits_{j=i}^{n-1} a_j = p.$ The vertex $u''$ has sum $p - a_i$. Let $u''' = a_{i+2}...a_{n-1}0...0(a_i + a_{i+1}),$ and notice that $(u'',u''') \in E$ because once again both vertices match properly and $\sum\limits_{j = i+1}^{n-1} a_j + (a_i + a_{i+1}) = p + a_{i+1} \le q$ given that $a_{i+1} \le p \le q - p.$  Finally, by doing left shifts on $u'''$ we can get to $u^* = 0...0(a_i + a_{i+1})...a_{n-1},$ and this concludes the proof for the lemma. \end{proof}

\noindent \newline So, given that every vertex of $D$ is either isolated or connected to some vertex of sum $p$, and that all vertices of $D$ of sum $p$ are connected to one common vertex, it follows that the underlying graph for $D$ has only one nontrivial component. Thus D is Eulerian, and this guarantees the existence of a universal cycle for $A'.$ \end{proof}

\section {Universal cycles of chains of subsets}

We next analyze another interesting set of combinatorial objects. Given $[n] = \{1,2,...,n\},$ a $k$-chain of $[n]$ is a sequence of sets $A_1 \subseteq A_2 \subseteq ... \subseteq A_k \subseteq [n].$ The number of distinct $k$-chains of $[n]$ is $(k+1)^n.$ We want to create a natural encoding for the $k$-chains of $[n],$ and analyze then the existence of universal cycles.

\noindent \newline Each $A_i$ of some $k$-chain can be encoded by a binary string of length $n$. In this case, for convenience we will think of that binary string as an ordered binary $n$-tuple denoted by $s_i$. We will define the $\oplus$ operation among these $n$-tuples as follows: $s_i \oplus s_j = s'$ if and only if $s'[k] = s_i[k] + s_j[k]$ for all $k$ such that $1 \le k \le n.$ For instance, let $a = \{1,2,3\},$ $b = \{2,2,2\},$ and $c = \{1,1,1\}$ then $a \oplus b \oplus c = \{4,5,6\}.$

\begin{lemma}  Let $C = A_1 \subseteq A_2 \subseteq ... \subseteq A_k$ be a k-chain. The $n$-tuple $S_c = s_1 \oplus s_2 \oplus ... \oplus s_k$ uniquely identifies $C.$
\end{lemma}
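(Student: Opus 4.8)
The plan is to show that the map $C \mapsto S_C$ is injective by showing that the multiset encoding $S_C$ can be inverted to recover each $A_i$ uniquely. Because the chain is nested, $A_1 \subseteq A_2 \subseteq \cdots \subseteq A_k$, an element $j \in [n]$ belongs to $A_i$ for all $i \ge m$ and to no $A_i$ with $i < m$, where $m = m(j)$ is the first index at which $j$ enters the chain (with the convention $m(j) = k+1$ if $j$ never appears). First I would observe that the $j$-th coordinate of $S_C$ counts exactly the number of sets in the chain that contain $j$, that is, $S_C[j] = |\{ i : j \in A_i \}| = k - m(j) + 1$. Hence from $S_C[j]$ we recover $m(j) = k - S_C[j] + 1$ for every $j$, and this determines the membership indicator of $j$ in each $A_i$: namely $j \in A_i$ if and only if $i \ge m(j)$, i.e. if and only if $i \ge k - S_C[j] + 1$.

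The key steps, in order, are: (1) verify that the $\oplus$ operation applied to the indicator tuples $s_1,\dots,s_k$ produces, in coordinate $j$, the sum $\sum_{i=1}^k s_i[j]$, which equals the count of indices $i$ with $j \in A_i$; (2) use the nesting hypothesis to argue that this set of indices is an up-set $\{m(j), m(j)+1, \dots, k\}$, so the count alone pins down $m(j)$; (3) conclude that for two $k$-chains $C$ and $C'$ with $S_C = S_{C'}$ we must have $m_C(j) = m_{C'}(j)$ for all $j$, hence $A_i = A_i'$ for all $i$, hence $C = C'$. A clean way to present this is to exhibit the inverse map explicitly: given an $n$-tuple $S$ in the image, define $A_i = \{ j \in [n] : S[j] \ge k - i + 1 \}$ and check directly that these sets form a $k$-chain whose $\oplus$-sum is $S$.

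I do not expect any serious obstacle here; the lemma is essentially a bookkeeping statement. The one point that needs a word of care is step (2): the argument that $\{ i : j \in A_i\}$ is an interval ending at $k$ is precisely where the chain condition $A_i \subseteq A_{i+1}$ is used, and without it the coordinate-wise sum would not determine the individual memberships (e.g. counts $1$ could come from $j$ lying in $A_1$ or in $A_k$). So the proof should state the nesting explicitly at that step. Everything else — that $\oplus$ is coordinatewise addition, that the recovered sets lie in $[n]$, that $A_k \subseteq [n]$ — is immediate from the definitions, so the write-up can be short.
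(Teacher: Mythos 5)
Your argument is correct and is essentially the same as the paper's: both proofs observe that $S_C[j]$ counts the number of sets containing $j$, and that the nesting forces those sets to be exactly the last $S_C[j]$ sets of the chain, so the tuple can be inverted. Your write-up is just a more careful version of the paper's one-line proof, including the explicit inverse map $A_i = \{\, j : S[j] \ge k-i+1 \,\}$.
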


\begin{proof}  The proof is quite straightforward. For every $k$-chain there is a unique $n$-tuple $S_c$. Suppose $S_c[i] = t,$ since $A_i \subseteq A_{i+1}$ then the only way in which $S_c[i] = t$ is if $s_j[k] = 1$ $\forall j$ such that $k - t + 1 \le j \le k,$ so for every $n$-tuple there is also a unique $k$-chain. \end{proof}

\begin{thm}
Let $A$ be the set of all $k$-chains of $[n]$, and let $A'$ be the set of the $n$-tuples for $A$. There exists a universal cycle for $A'$.
\end{thm}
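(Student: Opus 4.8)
The plan is to observe that, by the preceding lemma, the $\oplus$-encoding sends the $k$-chains of $[n]$ bijectively onto the set of all $n$-tuples with entries in $\{0,1,\dots,k\}$: the $i$-th coordinate of $S_c$ records how many of the nested sets $A_1\subseteq\cdots\subseteq A_k$ contain the element $i$, and conversely every such tuple arises from exactly one chain. Hence $A'$ is nothing but the collection of all words of length $n$ over the alphabet $\Sigma=\{0,1,\dots,k\}$ of size $k+1$, of which there are indeed $(k+1)^n$, and a universal cycle for $A'$ is precisely a de Bruijn cycle of order $n$ on $k+1$ symbols. Such cycles are classical, but to keep the argument self-contained I would reproduce the short overlap-graph proof, exactly parallel to that of Theorem 1.

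Concretely, form the digraph $D = <V,E>$ whose vertex set $V$ is the set of all $(n-1)$-tuples over $\Sigma$, with a directed edge from $v = a_1\dots a_{n-1}$ to $u = b_1\dots b_{n-1}$ whenever $a_i = b_{i-1}$ for $i = 2,\dots,n-1$. Each edge is labelled by the $n$-tuple $a_1\dots a_{n-1}b_{n-1}$, and this labelling is a bijection between $E$ and $A'$; a path in $D$ spells out the word obtained by overlapping consecutive $(n-1)$-tuples, so an Eulerian circuit of $D$ yields the desired universal cycle. By Fact 1 it then suffices to verify that $D$ is balanced and that its underlying graph is connected apart from isolated vertices.

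Balance is immediate, and unlike in the earlier overlap-graph constructions of this paper there is no border case at all: from any vertex $v$ we may append any of the $k+1$ symbols of $\Sigma$, since no sum constraint restricts the edge labels, so $d^+(v) = d^-(v) = k+1$ for every $v \in V$; in particular no vertex is isolated. For connectivity, note that from $v = a_1\dots a_{n-1}$ a walk of $n-1$ edges that appends the symbols $c_1,\dots,c_{n-1}$ in turn lands at the vertex $c_1\dots c_{n-1}$, and since the $c_j$ are arbitrary every vertex is reachable from every other. Thus $D$ is Eulerian, and an Eulerian circuit of $D$ gives a universal cycle for $A'$. I do not expect a genuine obstacle here; the only point worth stressing is the reduction in the first paragraph, namely that the $\oplus$-encoding of chains collapses exactly onto the unrestricted $(k+1)$-ary words of length $n$, after which the statement is just the existence of a de Bruijn cycle.
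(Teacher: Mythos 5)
Your proposal is correct and follows essentially the same route as the paper: both reduce the problem, via the bijection from the preceding lemma and the count $(k+1)^n$, to the existence of a universal cycle for all words of length $n$ over an alphabet of $k+1$ symbols. The only difference is that you reproduce the standard overlap-graph proof of the de Bruijn cycle's existence, whereas the paper simply cites this as a known fact.
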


\begin{proof} We mentioned before that $|A| = |A'| = (k+1)^n.$ Let $a \in A',$ and notice that $0 \le a[i] \le k$ for all valid $i$. So, any element of $A'$ uniquely corresponds to a word of length $n$ over the alphabet $\Sigma = \{0,...,k\}.$ Let $\Sigma_n$ be set of all words of length $n$ over $\Sigma,$ so $|\Sigma_n| = (k+1)^n.$ It follows then that $A' = \Sigma_n$, so a universal cycle for $\Sigma_n$ will determine a universal cycle for $A'$. Since universal cycles are known to exists for words of any length over any alphabet, there exists a universal cycle for $A'.$ \end{proof}

\noindent \newline A natural follow up would be to analyze sets of chains of $[n],$ so let's call these kind of sets $multichains.$ We are particularly interested in multichains such that every chain in the multichain shares the same last subset $D$. So, let $S = \{C_1,C_2,...,C_p\}$ be a multichain where $|C_i| = l_i,$ and $C_i = C_{i,1} \subseteq ... \subseteq C_{i,l_i-1} \subseteq D$ for all $i = 1,...,p.$  Notice that $D$ is not fixed initially, but once we choose some $D$ all $C_i \in S$ will have $D$ at the end. As we did for chains, we first need to encode the multichains, and we will borrow some ideas from what we have done before.

\noindent \newline $S_i$ will denote the corresponding $n$-tuple for each $C_i$ using the $\oplus$ operation. Let $D$ be any subset of $[n]$ and $s_d$  be its corresponding binary string. If $s_d[k] = 0,$ then all chains ending at $D$ will have zero at position $k.$ If $s_d[k] = t,$ all chains ending at $D$ will have the $t$ last sets with a one at position $k$ where $t$ varies between 1 and the length of the chain. This provides the intuition for the encoding of these multichains. For each multichain $S$ we will create an ordered $n$-tuple $N$ where each element of $N$ will be in turn an ordered $p$-tuple. If $s_d[k] = 0,$ then $N[k] = (0,...,0)$, otherwise $N[k] = (t_1,...,t_p)$ where $t_i = S_i[k].$

\noindent \newline For example, let $[4] = \{1,2,3,4\},$ $A_1 = \{1,4\},$ $A_2 = \{2,4\},$ and $A = \{1,2,4\}.$ Let's define also two simple chains $C_1 = A_1 \subseteq A,$ $C_2 = A_2 \subseteq A,$ and the multichain $S = \{C_1,C_2\},$ then the encoding for $S$ would be $N = \{\{2,1\},\{1,2\},\{0,0\},\{2,2\}\}.$ It is not hard to convince ourselves that the described encoding uniquely identifies a multichain.

\begin{thm}
 Let $A$ be then the set of all possible multichains $S$ over $[n],$ and let $A'$ be the set of the encodings for $A$. There exists a universal cycle for $A'$.
\end{thm}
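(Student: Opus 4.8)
The plan is to reduce this, like the $k$-chain case (Theorem 9), to a statement about universal cycles for all words of a fixed length over a suitable finite alphabet. First I would determine exactly which $n$-tuples $N$ arise as encodings of multichains. Each coordinate $N[k]$ is a $p$-tuple $(t_1,\dots,t_p)$; the constraint coming from ``all chains share the final subset $D$'' is that, at a given coordinate $k$, either $s_d[k]=0$, forcing $N[k]=(0,\dots,0)$, or $s_d[k]=1$, in which case each $t_i$ ranges over $\{1,\dots,l_i\}$. However, if the $l_i$ (the chain lengths) are not fixed in advance, the object ``all multichains over $[n]$'' is only well defined once we fix $p$ and an upper bound on the chain lengths, say $C_i$ has length at most $m$ for each $i$; I would state Theorem 12 under this convention (matching the implicit setup of the preceding paragraphs). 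With $p$ and $m$ fixed, the set of admissible symbols for a single coordinate is
\[
\Sigma \;=\; \{(0,\dots,0)\} \;\cup\; \bigl(\{1,\dots,m\}^p\bigr),
\]
a finite alphabet of size $1+m^p$, and the key observation is that there is \emph{no} cross-coordinate constraint: any choice of symbol from $\Sigma$ at each of the $n$ coordinates corresponds to a legitimate multichain (the zero symbol encodes $k\notin D$; a nonzero symbol $(t_1,\dots,t_p)$ encodes $k\in D$ together with the ``depth'' at which each chain $C_i$ first contains $k$). Hence $A'$ is exactly $\Sigma_n$, the set of all words of length $n$ over $\Sigma$.

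Once $A' = \Sigma_n$ is established, I would invoke the classical fact — already used in the proof of Theorem 9 — that universal cycles (equivalently, de Bruijn sequences of order $n$) exist for all words of any fixed length $n$ over any finite alphabet $\Sigma$; this follows from the Eulerian-circuit argument on the de Bruijn / overlap digraph on $\Sigma^{n-1}$, whose in- and out-degrees are each $|\Sigma|$ and which is connected. Such an Eulerian circuit yields a cyclic string over $\Sigma$ of length $|\Sigma|^n$ in which every word of $\Sigma_n$ appears exactly once, and, reading each symbol back as a $p$-tuple, this is precisely a universal cycle for $A'$.

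The only real work, therefore, is the combinatorial bookkeeping of the first paragraph: checking that the encoding $S \mapsto N$ is a \emph{bijection} between multichains (with the fixed parameters $p$, $m$) and $\Sigma_n$, rather than merely an injection. Injectivity was already asserted informally in the text; for surjectivity I would, given an arbitrary word $N \in \Sigma_n$, explicitly reconstruct $D = \{\,k : N[k]\neq (0,\dots,0)\,\}$ and, for each $i$, the chain $C_i = C_{i,1}\subseteq\cdots\subseteq C_{i,l_i}=D$ by setting $C_{i,j} = \{\,k : N[k]=(t_1,\dots,t_p)\text{ with } t_i \ge l_i - j + 1\,\}$, and verify this is a valid chain ending at $D$. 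I expect this surjectivity verification — in particular pinning down the convention on chain lengths so that every symbol in $\Sigma$ is actually realized — to be the main (though routine) obstacle; after that the theorem is immediate from Theorem~9's ingredient.
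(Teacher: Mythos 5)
Your proposal follows essentially the same route as the paper: identify the set of encodings $A'$ with the set of \emph{all} words of length $n$ over a finite alphabet of $p$-tuples (the paper uses an alphabet of size $\prod_{i=1}^{p} l_i + 1$ with the chain lengths $l_i$ fixed, rather than your $1+m^p$ with a common bound $m$), and then invoke the existence of de Bruijn sequences for all words of a given length over any alphabet. Your additional attention to surjectivity of the encoding and to pinning down the parameters $p$ and the chain lengths is a reasonable tightening of details the paper leaves implicit, but the argument is the same.
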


\begin{proof} The proof idea is similar to that for a single chain. Let $S = \{C_1,C_2,...,$ $C_p\}$ , $L = \{l_1,...,l_p\}$ where $l_i = |C_i|,$ and let $N$ be the $n$-tuple encoding $S$. Then, $N[k] = (t_1,...,t_p),$ and notice that $1 \le t_i \le l_i$ for any $k$ unless $N[k] = (0,...,0)$. So, the number of distinct $p$-tuples is $\prod\limits_{i=1}^{p} l_i + 1.$ Let $P$ be the set of all such $p$-tuples, and let $\Sigma'$ be some alphabet which uniquely assign a symbol to each element of $P$. Then, any word of length $n$ over $\Sigma'$ will uniquely determine an encoding $N$ of some multichain $S$. So, since it is known that there is a universal cycle for all words of any length over any alphabet, then there is a universal cycle for $A'.$ \end{proof}

\section {Universal cycles of words over restricted alphabets}

Given some alphabet $\Sigma$ it is known that all words of any length can be u-cycled. Let $A_n$ be the set of all words of a fixed length $n$ over $\Sigma.$ We are interested in imposing some restrictions on $\Sigma$ i.e. analyzing the existence of u-cycles for some $A'_n \subset A_n.$ 

\noindent \newline Some u-cycles were found in [8] for different kinds of restrictions on $\Sigma$. For example, a $password$ is an $n$-length word over $\Sigma$ which contains at least one character from $q$ distinct subsets (classes) of $\Sigma.$ It was shown in [8] that u-cycles for passwords exist as long as $2q \le n.$ We notice that in the case where each class is of size 1, such strong restriction is not necessary.

\begin{thm}
Let $X \subseteq \Sigma$ and $R \subseteq \Sigma$ such that $X \cap R = \emptyset$ and $|R| < n.$ Let $A'_n$ be the set of all words of length $n$ over $\Sigma$ such that $w \in A'_n,$ if and only if $w$ contains all characters from $R$ and none of the characters from $X.$ There exists a u-cycle for $A'_n.$ 
\end{thm}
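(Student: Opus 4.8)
The plan is to mimic the transfer‑graph (overlap digraph) strategy used in the proof of Theorem~1, but over the alphabet $\Sigma$ instead of $\{0,1\}$, and to track, in addition to the overlap, which required characters have already appeared. First I would reduce to the case $X=\emptyset$: since characters in $X$ never occur in any word of $A'_n$, we may simply delete $X$ from $\Sigma$ and work over $\Sigma'=\Sigma\setminus X$, so that $A'_n$ becomes ``all $n$-length words over $\Sigma'$ containing every character of $R$.'' Note $|R|<n$ is exactly the analogue of the hypothesis $p<n-1$ in Theorem~1, and it guarantees that at least one non‑required ``free'' slot is available in each word, which is what makes connectivity work.

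Next I would build a digraph $D=\langle V,E\rangle$ in which each word of $A'_n$ corresponds to exactly one edge. The natural vertex set is the set of length‑$(n-1)$ words $v=a_1\dots a_{n-1}$ over $\Sigma'$, \emph{annotated} by the subset $T(v)\subseteq R$ of required characters that $v$ already contains; an edge appends a character $b$ to get the $n$-word $a_1\dots a_{n-1}b$, and the edge is present iff that $n$-word lies in $A'_n$, i.e.\ iff $T(v)\cup(\{b\}\cap R)=R$. The head of the edge is the shifted word $a_2\dots a_{n-1}b$ carrying its own annotation. An Eulerian circuit in $D$ then spells a cyclic word of length $|A'_n|$ containing each element of $A'_n$ exactly once, so by Fact~1 it suffices to check the balance condition $d^+(v)=d^-(v)$ at every vertex and weak connectedness of the nontrivial part.

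For the degree condition: at a vertex $v$ with annotation $T$, let $M=R\setminus T$ be the still‑missing required characters. An out‑edge appending $b$ is legal iff $M\subseteq\{b\}$, so $d^+(v)=|\Sigma'|$ if $M=\emptyset$, $d^+(v)=1$ if $M$ is a single character, and $d^+(v)=0$ if $|M|\ge 2$ — and a symmetric count (now looking at the first letter $a_1$ of $v$ rather than the appended letter) gives the same values for $d^-(v)$, because a word $v$ with two or more missing required letters can be neither extended nor preceded inside an element of $A'_n$. So vertices with $|M|\ge 2$ are isolated and all others are balanced. For connectivity, I would argue as in Theorem~1 that every non‑isolated vertex reaches a canonical ``fully satisfied'' vertex — one whose underlying word already contains all of $R$, so that it has annotation $T=R$ and hence out‑ and in‑degree $|\Sigma'|$ — by a sequence of legal appends (using $|R|<n$ to be sure a free slot exists into which one can write missing required letters without ever dropping below the needed multiset), and then that the fully‑satisfied vertices are mutually reachable via left shifts and single‑letter substitutions exactly in the style of Lemma~2.

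The main obstacle I expect is \emph{not} the degree count but making the annotation bookkeeping honest: a length‑$(n-1)$ word over $\Sigma'$ may need to be split into several vertices (one per admissible annotation), and one must confirm that the edge relation and the shift are consistent with the annotations — i.e.\ that when we read an Eulerian circuit off $D$ the annotations are forced and never contradict the actual letters seen in the last $n-1$ positions — and that the connectivity walks respect those annotations at every step. Once that is set up carefully, the argument is a routine adaptation of Theorem~1 and Lemma~2, with $|R|<n$ playing the role of $p<n-1$.
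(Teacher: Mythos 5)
Your proposal follows essentially the same route as the paper: delete $X$ and work over $\Sigma'=\Sigma\setminus X$, form the overlap digraph on $(n-1)$-words, verify $d^+(v)=d^-(v)$ by the same three-case count on the number of missing required letters, and establish weak connectivity by steering every non-isolated vertex to a canonical vertex containing all of $R$ via legal appends, left shifts, and adjacent swaps. The one point where you diverge --- annotating each vertex with the set $T(v)$ of required letters it contains --- is harmless but redundant, since $T(v)$ is determined by the word $v$ itself, so no vertex ever splits and the bookkeeping obstacle you flag does not actually arise.
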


\begin{proof} We will call $X$ the exclusion set and $R$ the required set. We could see $R$ as the set containing $|R|$ distinct classes of size one. Notice that all words in $A'_n$ are actually over the alphabet $\Sigma' = \Sigma - X,$ so we are interested in u-cycling all words of length $n$ over $\Sigma'$ which contain all characters from $R.$ 

\noindent \newline Let us create the overlap graph $D = <V,E>$ for $A'_n$ where $V$ is the set of all $(n-1)$-length strings over $\Sigma'.$ Let $v = a_1a_2...a_{n-1}$ and $u = b_1b_2...b_{n-1}$; there is a directed edge $(v,u) \in E$ if and only if $v,u \in V,$ $a_i = b_{i-1}$ for $i = 2,...,n-1,$ and $R \subseteq \{a_1,...,a_{n-1},b_{n-1}\}.$ Once again proving that $D$ is Eulerian is sufficient  to guarantee the existence of a u-cycle for $A'_n.$
  
\noindent \newline Let $v \in V.$ If $v$ is missing more than one character from $R$ then $d^+(v) = d^-(v) = 0.$ If it is missing only one character then it can only have incident edges which append the missing character, so $d^+(v) = d^-(v) = 1.$ If $v$ already has all characters from $R$ then anything can be appended, so  $d^+(v) = d^-(v) = |\Sigma'|.$ Then, $\forall v \in V$  $d^+(v) = d^-(v) .$

\noindent \newline We next analyze the connectivity of $D.$ Let $R = \{r_1,...,r_k\},$ and let $\rho = r_1r_2...r_k$ be a $k$-length string. We call $\delta = \rho\rho...\rho q \in V$ the {\it vertex of maximal distributed density} where $q = r_1r_2...r_b$ for some $1 \le b \le k$. Given that $k \le n-1,$ we have $n-1 = ak + b,$ so the number of times $\rho$ appears in $\delta$ is $a,$ $|q| = b,$ and $|\delta| = n-1.$

\begin{lemma} If $v \in V,$ then $v$ is either isolated or it is connected to $\delta.$
\end{lemma}

\begin{proof} If $v$ is missing more than one character from $R$ then $v$ is isolated, so we may assume that $v$ is missing at most one character from $R.$ If $v$ is not missing any character from $R$ then we denote by $\rho'$ to the $k$-string which contains all $k$ characters from $R$ in the order they first appear in $v.$ If it is the case that $v$ is missing some character $a \in R,$ then $\rho'$ will start with $a$ and will then contain all other $k-1$ characters from $R$ in the order they first appear in $v.$ In any case, $\rho'$ is a permutation of $\rho.$

\noindent \newline Notice now that starting at $v$ we can go through the edges that append $\rho'$ to the end of $v,$ because by adding $\rho'$ we guarantee that each element from $R$ appears at least once in each of those edges. So, after appending $\rho',$ we get to some vertex $v' = a_1...a_{n-1-k}\rho' \in V.$ We will then repeatedly append $\rho$ to $v'$ to reach $v'' = \rho'\rho...\rho q.$ Remember that $n-1 = tk + r,$ so $\rho$ will be added exactly $t-1$ times. Let's analyze why it is possible to get to $v''$ from $v'$. In this case, our claim could be even stronger; from $v' = a_1...a_{n-1-k}\rho'$ we can get to any $u = \rho'b_1...b_{n-1-k}$ because the direct path  $v \leadsto u$ will generate the string $S = a_1...a_{n-1-k}\rho'b_1...b_{n-k-1},$ and any $n$-length substring of $S$ fully contains  $\rho'.$ So $v$ is connected to $v'',$ and we next show that $v''$ is connected to $\delta.$

\noindent \newline Notice that $v''$ contains every element from $R$ at least once. As noted, the difference between $v''$ and $\delta$ is that $\rho'$ is some permutation of $\rho.$ By doing only swaps of adjacent elements one can get from one permutation to any other, so by proving that $v''$ is adjacent to $u = \rho''\rho...\rho q$ where $\rho''$ differs from $\rho'$ by an adjacent swap we guarantee that $v''$ is connected to $\delta.$

\noindent \newline So, let $\rho' = a_1...a_ia_{i+1}...a_{k}$, and let $\Lambda = \rho...\rho q;$ then $v'' = \rho'\Lambda.$ Left shifts are feasible in $v''$ since $\rho'$ contains all elements from $R.$ Thus $v''$ is connected to $v_1 = a_ia_{i+1}...a_{k}\Lambda a_1...a_{i-1}.$ Now, let $v_2 = a_{i+1}...a_{n-1}\Lambda a_1...$ $a_{i-1}a_{i+1}$ and $v_3 = a_{i+2}...a_{n-1}\Lambda a_1...a_{i-1}a_{i+1}a_i.$ The edge $(v_1,v_2) \in E$ because both vertices match properly and the corresponding $n$-length string contains every character from $R.$ Similarly, $(v_2,v_3) \in E,$ so $v''$ is connected to $v_3$ which by left shifts is connected to $u,$ hence $v''$ is connected to $u.$ By doing as many swaps as necessary $v''$ is connected to $\delta$ which concludes the proof for the lemma. \end{proof}

\noindent \newline Given that every vertex in $V$ is either isolated or connected to $\delta,$ then $D$ has a single nontrivial component. Since $d^+(v) = d^-(v)$ for all $v \in V,$ $D$ is Eulerian, and there exists a universal cycle for $A'_n.$ \end{proof}

\noindent \newline Next we illustrate how can we use the restricted alphabet theorem. Given $[n] = \{1,...,n\},$ the Sperner families of size 2 will consist of all $(A_1,A_2) \in 2^{[n]} \times 2^{[n]}$ such that $A_1 \not\subset A_2$ and $A_2 \not\subset A_1.$ $A_1$ and $A_2$ can be represented by a binary string of length $n.$

\noindent \newline Now let's define the $\odot$ operation between binary strings where $s = s_1 \odot s_2$ if and only if
\begin{eqnarray}
s[i] = \begin{cases} 0 & \mbox{if } s_1[i]s_2[i] = ``00" \\ 1 & \mbox{if } s_1[i]s_2[i] = ``01" \\ 2 & \mbox{if } s_1[i]s_2[i] = ``10"\\ 3 & \mbox{if } s_1[i]s_2[i] = ``11" \end{cases}\nonumber\\
\end{eqnarray}

\noindent \newline Given a Sperner family of size two $S = (A_1,A_2) \in [n] \times [n],$ $s = s_1 \odot s_2$ uniquely encodes S. 

\begin{cor}
Let $A'$ be the set of all the encodings for all Sperner families of size two. There exists a u-cycle for $A'.$
\end{cor}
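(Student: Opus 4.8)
The plan is to reduce Corollary 12 directly to Theorem 11 by identifying the set $A'$ of $\odot$-encodings with the words over a restricted alphabet that Theorem 11 handles. First I would pin down exactly which length-$n$ words over $\Sigma = \{0,1,2,3\}$ arise as encodings $s = s_1 \odot s_2$ of Sperner families of size two. By the definition of $\odot$, the condition $A_1 \not\subseteq A_2$ holds if and only if there is a coordinate $i$ with $s_1[i] = 1$ and $s_2[i] = 0$, i.e.\ $s[i] = 2$; symmetrically, $A_2 \not\subseteq A_1$ holds if and only if some coordinate of $s$ equals $1$. Hence $(A_1,A_2)$ is a Sperner family of size two precisely when its encoding $s$ is a length-$n$ word over $\{0,1,2,3\}$ that contains at least one ``$2$'' and at least one ``$1$''. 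Combining this with the already-noted fact that $s = s_1 \odot s_2$ encodes $(A_1,A_2)$ bijectively (one recovers $s_1,s_2$ coordinatewise from $s$), we get that $A'$ equals exactly the set of length-$n$ words over $\{0,1,2,3\}$ that contain all characters of $R := \{1,2\}$.

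Next I would apply Theorem 11 with $\Sigma = \{0,1,2,3\}$, required set $R = \{1,2\}$, and exclusion set $X = \emptyset$. The only hypothesis to verify is $|R| < n$, i.e.\ $n \ge 3$, in which case Theorem 11 produces a u-cycle for the set $A'_n$ of all length-$n$ words over $\{0,1,2,3\}$ containing $1$ and $2$; by the previous paragraph $A'_n = A'$, so this is the desired u-cycle. The small cases $n \le 2$ are handled by hand: for $n \le 1$ there are no Sperner families of size two (among $\emptyset$ and $\{1\}$ one always contains the other), so the claim is vacuous, while for $n = 2$ the only Sperner families have encodings $12$ and $21$, which are both substrings of the cyclic string $12$.

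I do not expect a genuine obstacle: the entire content of the corollary is the translation of ``$A_1 \not\subseteq A_2$ and $A_2 \not\subseteq A_1$'' into ``the encoding contains a $2$ and contains a $1$'', after which the statement is an immediate instance of Theorem 11. The only point requiring a little care is the boundary condition $|R| < n$ in Theorem 11, which forces the separate (and trivial) treatment of $n \le 2$; alternatively one could simply state the corollary for $n \ge 3$ and omit the case check.
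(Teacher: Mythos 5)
Your proof is correct and follows essentially the same route as the paper: translate the Sperner conditions $A_1 \not\subseteq A_2$ and $A_2 \not\subseteq A_1$ into ``the encoding contains a $2$'' and ``the encoding contains a $1$,'' then invoke Theorem 11 with $R = \{1,2\}$ and $X = \emptyset$. Your explicit check of the hypothesis $|R| < n$ and the hand treatment of $n \le 2$ is a small point of extra care that the paper omits.
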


\begin{proof} Let $A_1$ and $A_2$ be any two subsets of $[n]$ with corresponding binary strings $s_1$ and $s_2.$ Let $s = s_1 \odot s_2.$ It follows then $s$ will be a string  over $\Sigma = \{0,1,2,3\}.$ $A_1 \not\subset A_2$ implies that there exists $x$ such that $x \in A_1$ and $x \not\in A_2,$ so $s$ will contain a ``2''. By a similar analysis we can conclude that $s$ will also contain a ``1''. So, we are interested in finding a u-cycle for all strings over $\Sigma$ which contains all elements from $R = \{1,2\}$ which is guaranteed to exist by the restricted alphabet theorem. \end{proof}

\noindent \newline Another use of the restricted alphabet theorem comes from finding universal cycles for 2-chains of proper subsets. Let $A$ be the set of all pairs $S = (A_1,A_2) \in 2^{[n]}  \times 2^{[n]}$ such that $A_1 \subset A_2.$ Let $s_1$ and $s_2$ be the corresponding binary string for $A_1$ and $A_2$ respectively. The string $s = s_1 \odot s_2$ provides a unique encoding for $S.$

\begin{cor}
Let $A'$ be the set of all the encodings for all 2-chains of proper subsets of $n$. There exists a u-cycle for $A'.$
\end{cor}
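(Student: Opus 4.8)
\noindent The plan is to reduce this corollary, exactly as in Corollary 13, to a single application of the restricted alphabet theorem. First I would unpack what the hypothesis $A_1 \subset A_2$ says about the encoded word $s = s_1 \odot s_2$. Recall that $\odot$ reads the two binary strings coordinate by coordinate and records the pair $(s_1[i],s_2[i])$ as one of the four symbols $0 = ``00"$, $1 = ``01"$, $2 = ``10"$, $3 = ``11"$; in particular $\odot$ is a bijection between $2^{[n]} \times 2^{[n]}$ and the set of all length-$n$ words over $\Sigma = \{0,1,2,3\}$, so encodings are in one-to-one correspondence with such words and no information is lost.

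Next I would translate the two halves of ``proper containment'' into conditions on $s$. The inclusion $A_1 \subseteq A_2$ is equivalent to saying that no coordinate has $s_1[i] = 1$ while $s_2[i] = 0$, i.e. the symbol ``$2$'' never occurs in $s$; conversely any word avoiding ``$2$'' decodes to a pair with $A_1 \subseteq A_2$. The strictness $A_1 \neq A_2$ is equivalent to the existence of a coordinate with $s_1[i] = 0$ and $s_2[i] = 1$, i.e. the symbol ``$1$'' occurs at least once in $s$. Putting these together, $A'$ is precisely the set of length-$n$ words over $\Sigma$ that contain every character of the required set $R = \{1\}$ and none of the exclusion set $X = \{2\}$.

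Finally I would invoke the restricted alphabet theorem with this choice of $X$ and $R$: they are disjoint, and $|R| = 1 < n$ (the case $n = 1$ being trivial, since then there is only the single proper $2$-chain $(\emptyset,\{1\})$ and its one-character encoding is its own u-cycle). The theorem then yields a u-cycle for $A'$, as required.

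I do not expect a genuine obstacle here; the only point requiring care is the bookkeeping in the middle paragraph, namely verifying that ``no symbol $2$'' together with ``at least one symbol $1$'' captures \emph{exactly} the proper-containment condition, so that the word set fed into the restricted alphabet theorem is literally equal to $A'$ and not merely a super- or sub-collection of it.
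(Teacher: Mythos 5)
Your proposal is correct and follows essentially the same route as the paper: translate $A_1 \subset A_2$ into the conditions ``no symbol $2$'' and ``at least one symbol $1$'' on $s = s_1 \odot s_2$, then apply the restricted alphabet theorem with $R = \{1\}$ and exclusion set $\{2\}$. Your extra care in checking that the correspondence is exact (both directions) and in noting the trivial $n=1$ case only tightens the argument the paper already gives.
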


\begin{proof}  Let $A_1$ and $A_2$ be any two subsets of $[n]$ with corresponding binary strings $s_1$ and $s_2.$ Let $s = s_1 \odot s_2.$ It follows then $s$ will be a string  over $\Sigma = \{0,1,2,3\}.$ $A_1 \subset A_2$ $\implies$ $\exists x$ s.t. $x \in A_2$ and $x \not\in A_1,$ so $s$ will contain a ``1''. Also, $\forall x \in A_1 \implies x \in A_2,$ so $s$ will have no ``2"s. So, we interested in finding a u-cycle for all strings over $\Sigma$ which contains all elements from $R = \{1\}$ and no element from $E = \{2\},$ which is guaranteed to exist by the restricted alphabet theorem. \end{proof}

\noindent \newline The restricted alphabet theorem can be used in a fashion similar to Collorary 13 and 14  to form u-cycles for a variety of other well studied combinatorial set systems such as intersecting set systems of two or more sets. We believe that this is a tool which could exploited to find u-cycles for other combinatorial structures.

\section {Universal Cycles of Lattice Paths}

Lattice paths are a heavily studied set of combinatorial objects (e.g. [12]). We continue our investigation by exploring the existence of universal cycles for some kind of lattice paths. In the previous sections we expanded on the study of some combinatorial structures for which the existance of u-cycles have been analyzed before. In this section, we try to motivate the analysis of u-cycles for lattice paths by analizing a specific kind. 

\noindent \newline In general a lattice path of length $n$ is a sequence of points $P_1,...,P_n$ of $\mathbb{Z} \times \mathbb{Z}$ such that the Manhattan distance between $P_i$ and $P_{i+1}$ is 1 for $i = 1,...,n-1.$ It is common to use a string over the alphabet $\Sigma = \{N,S,E,W\}$ to describe these paths (see Figure 1).
\begin{center}
\includegraphics[]{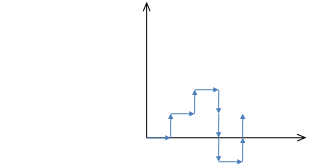}
\newline
Figure 1. Path corresponding to ``ENENESSSENN".
\end{center}

\noindent \newline Now suppose that we are given two positive integers $n$ and $k,$ and let $P_{n,k}$ be the set of the strings over $\Sigma$ corresponding to all lattice paths of length $n$ that start at the origin and end up at a distance of at most $k$ from it. In the case of $n = k = 3,$ $P_{3,3}$  will be the set of all paths ending at $(x,y)$ such that $|x| + |y| \le 3.$ Notice that there could be several ways of getting to the same $(x,y).$ For example we can get to $(1,2)$ through the paths: ``NNE'', ``NEN'', and ``ENN" all of length 3.

\noindent \newline The number of lattice paths of length $l$ starting at the origin and ending at $(x,y)$ is given by $\Lambda = {2n + x + y \choose n}{2n + x+ y \choose n + x}$ where $l = 2n + x + y$ [6]. From here it follows that $\Lambda \neq 0$ if $l \equiv x+y$ $mod$ $(2).$

\begin{thm} There exists a universal cycle for $P_{n,k}.$
\end{thm}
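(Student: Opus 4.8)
The natural approach, as in the proofs of Theorems 1, 5 and 11, is to realize $P_{n,k}$ as the edge set of an overlap graph and invoke Fact 1. Write $m$ for the number of steps of a path in $P_{n,k}$, so that $P_{n,k}$ is the set of length-$m$ words over $\Sigma=\{N,S,E,W\}$ whose path from the origin ends at $L^1$-distance at most $k$. For a word $w$ let $D(w)\in\mathbb Z^2$ be the endpoint of its path, i.e.\ $D(w)=\sum_i\vec w_i$ where $\vec N,\vec S,\vec E,\vec W$ are the four unit vectors; thus $w\in P_{n,k}$ iff $\|D(w)\|_1\le k$. If $m\le k$ then $P_{n,k}=\Sigma^m$ and the result is just the classical existence of u-cycles for fixed-length words over a fixed alphabet, so assume $k<m$. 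Form the digraph $\mathcal D=\langle V,E\rangle$ with $V=\Sigma^{m-1}$ and a directed edge from $v=a_1\ldots a_{m-1}$ to $u=b_1\ldots b_{m-1}$ whenever $a_i=b_{i-1}$ for $2\le i\le m-1$ and $\|D(a_1\ldots a_{m-1}b_{m-1})\|_1\le k$. Edges of $\mathcal D$ correspond bijectively to the words in $P_{n,k}$, and following a path in $\mathcal D$ spells out the corresponding succession of length-$m$ windows, so an Eulerian circuit of $\mathcal D$ is a u-cycle for $P_{n,k}$. By Fact 1 it remains to prove that $\mathcal D$ is balanced and has a single nontrivial component.

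\noindent\textbf{Balanced degrees.} Appending a letter at the tail of $v$, and prepending a letter at the head of $v$, each add the corresponding unit vector to the endpoint of the length-$m$ word thereby created; hence $d^+(v)=d^-(v)=\#\{e\in\Sigma:\|D(v)+\vec e\|_1\le k\}$, so the only relevant feature of a vertex is its endpoint. Since $\|D(v)+\vec e\|_1\ge\|D(v)\|_1-1$, with equality attained by an inward $\vec e$ when $D(v)\ne\mathbf 0$ and any $\vec e$ admissible when $D(v)=\mathbf 0$ (using $k\ge1$), the non-isolated vertices are exactly those with $\|D(v)\|_1\le k+1$; call this set $\mathcal A$. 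Moreover the head of any valid edge out of $\mathcal A$ again lies in $\mathcal A$, its endpoint differing from the (norm $\le k$) endpoint of the edge by one unit vector.

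\noindent\textbf{Connectivity.} Take as canonical vertex the alternating word $\delta=NSNS\ldots$ of length $m-1$. Every length-$m$ window of $\delta\delta$ has endpoint in $\{(0,0),(0,1),(0,-1)\}$ and is therefore valid, so $\delta\in\mathcal A$, $\delta$ carries loops, and all cyclic shifts of $\delta$ are mutually connected. The goal is to show every $v\in\mathcal A$ is connected to $\delta$. \emph{Stage 1 --- bring the endpoint to the origin.} From $v$, repeatedly append a letter pointing from the current window endpoint toward the origin (arbitrarily when that endpoint is $\mathbf 0$); every edge so traversed has endpoint of norm $\le\|D(\text{current window})\|_1-1\le k$, hence is valid, and the running window stays in $\mathcal A$. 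The window endpoint is non-increasing in norm, and it cannot plateau for $m-1$ consecutive appends (over such a block the discarded head letters sum to the old endpoint and the appended letters sum to the new one, which would force too many inward steps to cancel); so after finitely many appends --- finishing with an alternating $NSNS\ldots$ tail --- one reaches a vertex $v'$ whose endpoint is as near $\mathbf 0$ as the parity of $m-1$ allows. \emph{Stage 2 --- rearrange into $\delta$.} When the endpoint norm is small, left shifts are available (the edge $v\to v_\ell$ has endpoint $D(v)+\vec a_1$), so the current window reaches all its cyclic rotations; and, exactly as in the proof of Theorem 11, a short two-edge maneuver realizes a transposition of two consecutive steps of the current window. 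Because transposing adjacent steps of a path fixes its endpoint, all endpoints occurring in such a maneuver stay at distance $\le k$ once the endpoint norm is small, which Stage 1 secured. Any word of minimal endpoint norm can be brought to $\delta$ by a sequence of these transpositions, so $v$ is connected to $\delta$.

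\noindent\textbf{Main obstacle.} Essentially all of the work is in the connectivity step, and within it in Stage 1. The subtlety is that appending a letter simultaneously grows the window at its tail and deletes a letter at its head, so $\|D(\text{window})\|_1$ need not decrease monotonically and the deleted head letter can push the endpoint outward. The fix is to steer the \emph{edge} endpoint --- not merely the window endpoint --- monotonically toward the origin, which confines it to the radius-$k$ ball while the window endpoint drifts only within the radius-$(k+1)$ shell; one must then verify, carefully, that neither Stage 1 nor the rotations and transpositions of Stage 2 ever create an edge whose path leaves the radius-$k$ ball, the smallest values of $k$ (where that ball is too thin for these maneuvers) calling for separate, direct treatment. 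Granting this, $\mathcal D$ is balanced with a single nontrivial component, so by Fact 1 it has an Eulerian circuit, which is the desired universal cycle for $P_{n,k}$.
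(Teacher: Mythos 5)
Your setup coincides with the paper's: the same overlap digraph on length-$(n-1)$ words over $\{N,S,E,W\}$, the same bijection between edges and members of $P_{n,k}$, and the same balance argument ($d^+(v)=d^-(v)$ because any letter that can be appended can equally be prepended, the endpoint of the resulting length-$n$ word being $D(v)+\vec e$ either way). That part is fine. The gap is in the connectivity argument, which is the entire substance of the theorem and which you yourself flag with ``Granting this.'' In particular, the Stage 1 claim that the window endpoint ``cannot plateau for $m-1$ consecutive appends'' under greedy inward appending is false as stated. Take $n=3$, $k=1$, and start at the vertex $EN$ with endpoint $(1,1)$: appending the inward letter $W$ gives the vertex $NW$ at $(-1,1)$, then appending the inward letter $S$ gives $WS$ at $(-1,-1)$, then $E$ gives $SE$ at $(1,-1)$, then $N$ returns to $EN$ at $(1,1)$ --- every edge traversed is legal (endpoint norm $1\le k$), every append points toward the origin, yet the vertex endpoint orbits at norm $2$ forever. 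So the greedy process requires a specific selection rule among the (generally two) inward directions together with an actual termination proof, neither of which you supply; and the Stage 2 assertion that the transposition maneuvers stay inside the radius-$k$ ball ``once the endpoint norm is small'' is likewise left unverified. As written, the proof does not go through.

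For comparison, the paper sidesteps the greedy process entirely. It first shows (Lemma 16) that every non-isolated vertex is connected to a vertex of degree $4$, i.e.\ one whose endpoint has norm at most $k-1$: degree-$1$ vertices (at the corners of the ball) escape by iterating their unique move, and degree-$2$ vertices, assumed WLOG in the first quadrant, are left-shifted until a non-$\{S,W\}$ letter leads, which is then exchanged for an $S$ or $W$ to strictly decrease both coordinates. At a degree-$4$ vertex there is enough slack that left shifts and arbitrary adjacent transpositions are always legal, so (Lemma 17) the word can be sorted into blocks $N\ldots NS\ldots SE\ldots EW\ldots W$ and then balanced by appending $S$'s and subsequently $W$'s until the endpoint reaches $(0,0)$ or $(0,1)$ according to the parity of $n-1$; finally all such vertices are linked by adjacent swaps together with $NS\to WE$ substitutions. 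If you want to salvage your route, the cleanest fix is to adopt that intermediate step: first retreat to a vertex of endpoint norm at most $k-1$, where every maneuver you need is unconditionally valid, rather than trying to steer a trajectory that hugs the boundary of the ball.
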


\begin{proof} Let $V$ be the set of all strings over $\Sigma$ for lattice paths of length $n-1$ which start at the origin and end at a distance $d \le k+1$ from it. Let's define the digraph $D=<V,E>$, and let $v = a_1...a_{n-1} \in V$ and $u = b_1...b_{n-1} \in V.$ The edge $(v,u) \in E$ if and only if $a_i = b_{i-1}$ for $i = 2,...,n-1,$ and $a_1...a_{n-1}b_{n-1} \in P_{n,k}.$ Proving the existence of universal cycles for $P_{n,k}$ is equivalent to showing that $D$ is Eulerian.

\begin{center}
\includegraphics[]{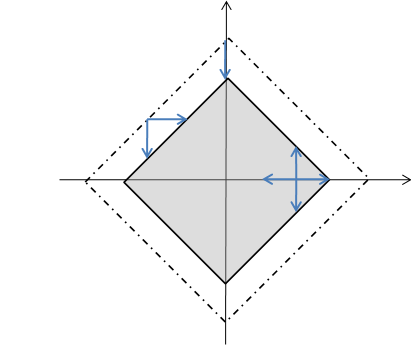}
\newline
Figure 2. Showing the $P_{n,k}$ region and vertices of different degrees.
\end{center}

\noindent \newline Let $v \in V,$ then $v$ is either in $P_{n,k}$ or one move away from being in it. The path corresponding to $v$ ends at some coordinates $(v_x,v_y)$ of the plane. From now on when we refer to the position or coordinates of a vertex in $V$, we mean the coordinates of its endpoint. Based on $(v_x,v_y),$ one can add 1, 2, or 4 distinct characters to $v$  to reach another vertex of $V$  through a valid edge (see Figure 2). But anything that can be appended at the end of $v$ to get a string in $P_{n,k}$ can also be appended at the beginning; hence $d^+(v) = d^-(v)$ for all $v \in V.$ We next analyze the connectivity of $D.$

\begin{lemma} Every vertex in $V$ has degree 4 or it is connected to some vertex of degree 4. 
\end{lemma}

\begin{proof} Let $v \in V.$ If $v$ has degree 4 we are done, so let $v$ have degree 1 or 2. We will prove first that every vertex of degree 1 is connected to some vertex of degree 2 or 4, reducing the proof for this lemma to show that every vertex of degree 2 is connected to some vertex of degree 4.

\noindent \newline Every vertex of degree 1 corresponds to a path which ends at a corner. Given that $k \ge 2,$ corners are at distance at least 4, meaning that we need at least two moves to get from one corner to another (In general the endpoints of adjacent vertices are at distance 0 or 2). If $v$ has degree 1 then it has only one neighbor $u \in V$ which could have degree 1, 2, or 4. If $u$ has degree 2 or 4 we are done with this first part of the proof, but if $u$ has degree 1, then $u$ corresponds to a path that ends in the same corner as $v$ given that distinct corners are at least two moves away from each other. From $u,$ we take the only possible move one more time. By repeating this move enough times, one is guarantee to leave this corner at some point ending at a vertex of degree 2 or 4.

\noindent \newline Let us clarify this idea with an example. Suppose vertex $v' \in V$ corresponds to a path ending at the ``north corner". From  the north corner we can only move south; moving south from $v'$ we can get to some other vertex in the north corner if and only if the corresponding string for $v'$ starts with an ``S" move. So, the only way of not getting out from the north corner is if $v'  = S....S,$ but to get to the north corner in the first place we should have moved at least $k$ times to the north. So, by moving south enough times we would leave the corner once one of those ``N" moves gets replaced by an ``S" move. Then every vertex of degree one is connected to some vertex of degree 2 or 4, and  it is sufficient to prove that every vertex of degree 2 is connected to a vertex of degree 4. 

\noindent \newline Let $d^+(v) = d^-(v) = 2,$ and let $(v_x,v_y)$ be the ending position of the path corresponding to $v.$ Without lost of generality we may assume $v_x \ge 0$ and $v_y \ge 0.$ Then the possible moves from $v$ are ``S" and ``W". Let $v = a_1...a_{n-1};$ we can left shift $v$ until we get to $v_1 = a_i...a_{n-1}a_1...a_{i-1}$ where $a_i$ is the first move of $v$ such that $a_i \neq$ ``S" and $a_i \neq$ ``W". Such $a_i$ is guaranteed to exist because $v$ was assumed to be in the first quadrant. If $a_i = $ ``N"  let $v_2 = a_{i+1}...a_{n-1}a_1...a_{i-1}W$ and if  $a_i = $``E" let $v_2 = a_{i+1}...a_{n-1}a_1...a_{i-1}S.$ In both cases, the edge $(v_1,v_2) \in E$ because the strings match appropriately, and the resulting $n$-length path is in $P_{n,k}.$  Notice that this is the case because $v_1$ is still in the first quadrant since it is just a reordering of $v,$ and moving ``S" or ``W" from a vertex in the first quadrant will not take us out of $P_{n,k}$. Hence, $v$ is connected to $v_2.$ But the coordinates of $v_2$ are $(v_x-1,v_y-1),$ so $d^+(v_2) = d^-(v_2) = 4.$ This concludes the proof for this lemma. \end{proof}

\begin{lemma} If $n$ is odd, every vertex in $V$ is connected to a vertex whose corresponding path ends at $(0,0),$ and if $n$ is even, every vertex in $V$ is connected to a vertex corresponding to a path ending at $(0,1)$.
\end{lemma}

\begin{proof} The parity of $n$ determines the existence of a path of length $n-1$ ending at the origin. Basically if $n$ is odd, there will be an $(n-1)$-length path starting and ending at the origin, but if $n$ is even we can only get to some vertex at distance 1 from the origin. 

\noindent \newline Since every vertex in $V$ has degree 4, or it is connected to some vertex of degree 4, it is sufficient to prove that any vertex of degree 4 is connected to some vertex at (0,0) or (0,1). So, choose $v \in V$ such that $d^+(v) = d^-(v) = 4.$ A vertex of degree 4 gives us plenty of choices. First, left shifts are always possible, and more importantly, any kind of adjacent swap is feasible. 

\noindent \newline Let $n_i,$ $s_i,$ $e_i,$ and $w_i$ be the number of ``N", ``S",``E", and ``W" moves in $v_i \in V$ respectively. Let $v_1 = N...NS...SE...EW...W \in V$ be a vertex with the same number of moves in each direction as $v.$ It is clear that by doing adjacent swaps one can get from $v$ to $v_1.$  Without losing generality we may assume $n_1 \ge s_1$ and $e_1 \ge w_1.$  Now, we will append ``S" moves to the end of $v_1$ until we get to some $v_2 = N...NS...SE...EW...WS...S \in V$ such that $0 \le n_2 - s_2 \le 1.$ Notice that $d^+(v_2) = d^-(v_2) = 4,$ and $v_2$ is at distance 1 or 0 from the $x$ axis. We can now reorder $v_2$ by doing adjacent swaps to get to $v_3 = E...EW...WN...NS...S.$ Since the number of ``E"s and ``W"s have not changed, we see that $e_1 = e_3 \ge w_3 = w_1,$ and we append ``W" at the end of $v_3$ until we get to $v_4 = E...EW...WN...NS...SW...W \in V$ where $0 \le e_4 - w_4 \le 1.$ 

\noindent \newline Let $(x_4,y_4)$ be the ending coordinates of the corresponding path for $v_4.$ If $n-1$ is even then $(x_4,y_4) = (0,0)$ or $(x_4,y_4) = (1,1),$ since we assume that $n_1 \ge s_1$ and $e_1 \ge w_1.$ From (1,1), we replace an ``E" move by a ``S" move and we get to some vertex in (0,0). So, if $n-1$ is even, every vertex in $V$ is connected to a vertex with coordinates $(0,0)$.

\noindent \newline Similarly,  if $n-1$ is odd then $(x_4,y_4) = (0,1)$ or $(x_4,y_4) = (1,0),$ since we assume that $n_1 \ge s_1$ and $e_1 \ge w_1.$ From (1,0), we replace an ``E" move by a ``N" move and we get to some vertex in (0,1). So, if $n-1$ is odd, every vertex in $V$ is connected to a vertex with coordinates (0,1), and this concludes the proof for this lemma. \end{proof}

\noindent \newline We just need to prove then that if $n-1$ is even, all vertices ending at (0,0) are connected, and if $n-1$ is odd, that all vertices ending at (1,0) are connected. Let for $n-1$ even $v_1,v_2 \in V$ be any two vertices corresponding to paths ending at the origin. Notice that $n_1 = s_1,$ $e_1 = w_1,$ $n_2 = s_2,$ and $e_2 = w_2.$ If $n_1 = n_2,$ then just by doing swaps one can get from $v_1$ to $v_2.$ Without losing generality assume that $n_1 > n_2.$ We reorder $v_1$ by doing adjacent swaps until we get to $v_3 = NSNS...NSEW...EW \in V$ where $n_3 = n_1.$ From $v_3$ it is possible to substitute the first ``NS" by a ``WE" by appending ``WE"s to the end. We can keep doing this until we get to $v_4 \in V$ such that $n_4 = n_2.$ Then by doing adjacent swaps $v_4$ is connected to $v_2,$ and thus so is $v_1.$ A similar analysis follows for two vertices at (0,1) when $n-1$ is odd. This complements the proof. \end{proof}

\section{Acknowledgements}
This work was done by Antonio Blanca under the supervision of Anant Godbole at the East Tennessee State University Research Experience for Undergraduates (REU) site during the summer of 2010. Support from NSF Grant 1004624 is gratefully acknowledged by the authors, as is the support offered by fellow REU participants. Also thanks to Andres Sanchez for a thoughtful review of the paper.

\end{document}